\theoremstyle{plain}
\newtheorem{theorem}                {Theorem}      [section]
\newtheorem{proposition}  [theorem]  {Proposition}
\theoremstyle{definition}
\newtheorem{example}      [theorem]  {Example}
\newtheorem{definition}   [theorem]  {Definition}
\numberwithin{equation}{section}
\def \R{{\mathbb R}}
\def \s{{\mathbb S}}
\DeclareMathOperator{\cst}{constant}
\DeclareMathOperator{\grad}{grad}
\numberwithin{equation}{section}
\begin{document}

\title[]{Biharmonic curves into quadrics}

\author{S.~Montaldo}
\address{Universit\`a degli Studi di Cagliari\\
Dipartimento di Matematica e Informatica\\
Via Ospedale 72\\
09124 Cagliari, Italia}
\email{montaldo@unica.it}

\author{A.~Ratto}
\address{Universit\`a degli Studi di Cagliari\\
Dipartimento di Matematica e Informatica\\
Viale Merello 93\\
09123 Cagliari, Italia}
\email{rattoa@unica.it}

\begin{abstract}
We develop an essentially algebraic method to study biharmonic curves into an implicit surface. Although our method is rather general, it is especially suitable to study curves into surfaces defined by a polynomial equation: in particular, we
use it to give a complete classification of biharmonic curves into real quadrics of the 3-dimensional Euclidean space.
\end{abstract}

\subjclass[2000]{58E20}

\keywords{Biharmonic maps, biharmonic curves, quadrics}

\thanks{Work supported by P.R.I.N. 2010/11 -- Variet\`a reali e complesse: geometria, topologia e analisi armonica -- Italy.}

\maketitle

\section{Introduction}\label{intro}

Biharmonic curves $\gamma:I\subset\R\to (N,h)$ of a
Riemannian manifold are the solutions of the fourth
order differential equation
\begin{equation}\label{eq-main}
\nabla^3_{\gamma'}\gamma' - R(\gamma',\nabla_{\gamma'} \gamma')\gamma' = 0,
\end{equation}
where $\nabla$ is the Levi-Civita connection on $(N,h)$ and $R$ is its curvature operator.\\

\noindent As we shall detail in the next  section,
they arise from a variational problem and are a
natural generalization of geodesics. In the last decade biharmonic curves have been extensively studied and classified in several spaces by analytical inspection of \eqref{eq-main} (see, for example, \cite{RCSMCO1,RCSMPP1,COP,CMOP,JIJL,JCJIJL,Dim,DF1,DF2,SMIO}) .\\

Although much work has been done, the full understanding
of biharmonic curves in a surface of the Euclidean three-dimensional space is far from been achieved. As yet, we have a clear picture of biharmonic curves in a surface only in the case that the surface is invariant by the action of a one parameter group of isometries of the ambient space. For example, in \cite{RCSMPP1} it was
proved that a biharmonic curve on a surface of revolution in the Euclidean space (invariant by the action
of $SO(2)$) must be a parallel, that is an orbit of the action of the group on the surface. This property was then generalized to invariant surfaces in  a $3$-dimensional manifold \cite{SMIO}.\\

The main obstacle in trying to describe and classify biharmonic curves on a surface by analytical methods is that  \eqref{eq-main} is a fourth order differential equation which is very hard to tackle. \\

In this paper we propose a scheme to classify biharmonic curves on a quadric in the three-dimensional Euclidean space by using only algebraic methods. The main point is that a quadric can be described implicitly by a polynomial equation $F(x,y,z)=0$ and we will show that the biharmonic candidates must be the intersection of the given quadric with another specific algebraic surface $G(x,y,z)=0$. The latter property allows us to classify the biharmonic curves into any non degenerate quadric. \\

In the last paragraph we give some examples to show how to use this approach for other implicit surfaces.

\section{Preliminaries}

{\it Harmonic maps}  are critical points of the {\em energy} functional
\begin{equation}\label{energia}
E(\varphi)=\frac{1}{2}\int_{M}\,|d\varphi|^2\,dv_g \,\, ,
\end{equation}
where $\varphi:(M,g)\to(N,h)$ is a smooth map between two Riemannian
manifolds $M$ and $N$. In analytical terms, the condition of harmonicity is equivalent to the fact that the map $\varphi$ is a solution of the Euler-Lagrange equation associated to the energy functional \eqref{energia}, i.e.
\begin{equation}\label{harmonicityequation}
    {\rm trace} \, \nabla d \varphi =0 \,\, .
\end{equation}
The left member of \eqref{harmonicityequation} is a vector field along the map $\varphi$, or, equivalently, a section of the pull-back bundle $\varphi^{-1} \, (TN)$: it is called {\em tension field} and denoted $\tau (\varphi)$.

A related topic of growing interest deals with the study of the so-called {\it biharmonic maps}: these maps, which provide a natural generalisation of harmonic maps, are the critical points of the bienergy functional (as suggested by Eells--Lemaire \cite{EL83})
\begin{equation}\label{bienergia}
    E_2(\varphi)=\frac{1}{2}\int_{M}\,|\tau (\varphi)|^2\,dv_g \,\, .
\end{equation}
In \cite{Jiang} Jiang derived the first variation and the second variation formulas for the bienergy. In particular, he showed that the Euler-Lagrange equation associated to $E_2(\varphi)$ is
\begin{equation}\label{bitensionfield}
    \tau_2(\varphi) = - J\left (\tau(\varphi) \right ) = - \triangle \tau(\varphi)- \rm{trace} R^N(d \varphi, \tau(\varphi)) d \varphi = 0 \,\, ,
    \end{equation}
where $J$ denotes (formally) the Jacobi operator of $\varphi$, $\triangle$ is the rough Laplacian on sections of $\varphi^{-1} \, (TN)$ that, for a local orthonormal frame $\{e_i\}_{i=1}^m$ on $M$, is defined by
$$
\Delta=-\sum_{i=1}^m\{\nabla^{\varphi}_{e_i}\nabla^{\varphi}_{e_i}-\nabla^{\varphi}_{\nabla^{M}_{e_i}e_i}\}\,,
$$
and
\begin{equation}\label{curvatura}
    R^N (X,Y)= \nabla_X \nabla_Y - \nabla_Y \nabla_X -\nabla_{[X,Y]}
\end{equation}
is the curvature operator on $(N,h)$.

We point out that \eqref{bitensionfield} is a {\it fourth order}  semi-linear elliptic system of differential equations. We also note that any harmonic map is an absolute minimum of the bienergy, and so it is trivially biharmonic. Therefore, a general working plan is to study the existence of biharmonic maps which are not harmonic:  these shall be referred to as {\it proper biharmonic maps}. We refer to \cite{SMCO} for existence results and general properties of biharmonic maps. \\

	Let now
$\gamma:I\to (N,h)$ be a curve parametrized by arc length, from an open
interval $I\subset\R$ to a Riemannian
manifold. In this case, putting $T=\gamma'$, the tension field
becomes
$ \tau({\gamma})=\nabla_{T}T$ and the
biharmonic equation \eqref{bitensionfield} reduces to
\begin{equation}\label{bieq-curve}
\nabla^3_{T}T - R(T,\nabla_T T)T = 0\,\, .
\end{equation}

\noindent In order to describe geometrically equation \eqref{bieq-curve},
let us recall the definition of the Frenet frame.
\begin{definition}[See, for example, \cite{Laugwitz}] \label{def2.1}
The Frenet frame $\{F_{i}\}_{i=1,\dots,n}$ associated
to a curve
$\gamma : I\subset {\R}\to (N^{n},h)$,  parametrized by arc
length, is the orthonormalisation of
the $(n+1)$-uple  $\{ \nabla_{\frac{\partial}{\partial
t}}^{(k)} d\gamma
(\frac{\partial}{\partial t})
\}_{k=0,\dots,n}$ described by:
\begin{align*} F_{1}&=d\gamma
(\frac{\partial}{\partial t}) ,  \\
\nabla_{\frac{\partial}{\partial
t}}^{\gamma} F_{1} &= k_{1} F_{2} , \\
\nabla_{\frac{\partial}{\partial
t}}^{\gamma} F_{i} &= - k_{i-1} F_{i-1}
+ k_{i}F_{i+1} , \quad \forall i =
2,\dots,n-1 , \\
\nabla_{\frac{\partial}{\partial
t}}^{\gamma} F_{n} &= - k_{n-1} F_{n-1}
, \end{align*}
where the functions $\{k_{1},k_{2},\ldots,k_{n-1}\}$
are called the {\em curvatures} of $\gamma$ and $\nabla^{\gamma}$
is the Levi-Civita connection on the pull-back bundle $\gamma^{-1}(TN)$. Note that
$F_{1}=T={\gamma}'$ is the unit tangent vector field along the curve.
 \end{definition}

\noindent Using the Frenet frame, the biharmonic equation
\eqref{bieq-curve} reduces to a differential system invol\-ving the
curvatures of $\gamma$ and if we look for proper biharmonic solutions,
that is for biharmonic  curves with $k_{1}\neq0$, we have

\begin{proposition}[\cite{CMOP}]\label{prop1}
Let $\gamma : I \subset {\R} \to (N^{n},h)$ ($n\geq 2$) be
a curve parametrized by arc length from an open interval
of $\R$ into an n-dimensional Riemannian manifold $(N^n,h)$.
Then $\gamma$ is proper biharmonic if and only if:
\begin{equation}\label{euler-lagrange}
\left\{
\begin{array}{l}
k_{1} = \cst\neq 0\\
k_{1}^2 + k_2^2 =  R(F_1,F_2,F_1,F_2) \\
{k'_2} = -R(F_1,F_2,F_1,F_3) \\
k_2 k_3 = - R(F_1,F_2,F_1,F_4) \\
R(F_1,F_2,F_1,F_j) = 0\,\, , \hspace{1,5 cm} j = 5,\ldots, n\,\,.
\end{array}
\right.
\end{equation}
\end{proposition}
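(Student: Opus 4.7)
The plan is to compute the left-hand side of the biharmonic equation \eqref{bieq-curve} by iterating the Frenet relations of Definition \ref{def2.1}, to expand the curvature term in the same frame, and then to project the resulting vector identity onto each $F_j$. Each of the scalar equations listed in \eqref{euler-lagrange} should then appear as the projection onto a single Frenet direction.

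Concretely, from $\nabla_T T = k_1 F_2$ and $\nabla_T F_2 = -k_1 F_1 + k_2 F_3$, Leibniz gives
\begin{equation*}
\nabla_T^2 T = -k_1^2 F_1 + k_1' F_2 + k_1 k_2 F_3 ,
\end{equation*}
and a further differentiation, also using $\nabla_T F_3 = -k_2 F_2 + k_3 F_4$, yields
\begin{equation*}
\nabla_T^3 T = -3 k_1 k_1' F_1 + \bigl(k_1'' - k_1^3 - k_1 k_2^2\bigr) F_2 + \bigl(2 k_1' k_2 + k_1 k_2'\bigr) F_3 + k_1 k_2 k_3 F_4 .
\end{equation*}
For the curvature contribution I would write $R(T, \nabla_T T) T = k_1 R(F_1, F_2) F_1$ and decompose this endomorphism in the Frenet basis, obtaining $k_1 \sum_{j=1}^{n} R(F_1, F_2, F_1, F_j) F_j$.

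Equating the two expressions component by component then produces the whole system. The $F_1$-projection reads $-3 k_1 k_1' = k_1\, R(F_1, F_2, F_1, F_1)$; the right-hand side vanishes by the antisymmetry of the curvature tensor, and since $k_1 \neq 0$ by the proper biharmonicity assumption one concludes that $k_1$ is constant, which is the first line of \eqref{euler-lagrange}. Substituting $k_1' = k_1'' = 0$ back, the $F_2$-projection simplifies to $k_1^2 + k_2^2 = R(F_1, F_2, F_1, F_2)$, the $F_3$-projection to $k_2' = -R(F_1, F_2, F_1, F_3)$, the $F_4$-projection to $k_2 k_3 = -R(F_1, F_2, F_1, F_4)$, while for $j \geq 5$ the left-hand side has no component along $F_j$ and one is left with the vanishing of $R(F_1, F_2, F_1, F_j)$. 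Since each step is an equivalence, the converse direction is automatic.

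The only genuine difficulty is the bookkeeping in the computation of $\nabla_T^3 T$, where each Leibniz step creates several cross terms that must be regrouped correctly along $F_1$, $F_2$, $F_3$, $F_4$; in parallel, the sign conventions for $R(X,Y,Z,W)$ have to be tracked carefully so that the symmetries of the Riemann tensor produce the signs appearing in \eqref{euler-lagrange}. Once this organisational step is carried out, the proposition is just the statement that the resulting scalar system is the componentwise form of \eqref{bieq-curve}.
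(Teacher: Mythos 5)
Your derivation is correct and is essentially the same Frenet-frame computation used in the cited source \cite{CMOP} (the paper itself only quotes the result without proof): one expands $\nabla_T^3T$ via the Frenet equations, decomposes $R(T,\nabla_TT)T$ in the frame, and reads off the system componentwise, with the $F_1$-component forcing $k_1=\cst$. The one sign you leave implicit is fixed by the convention $R(X,Y,Z,W)=h(R(X,Y)W,Z)$, which together with \eqref{curvatura} makes $R(F_1,F_2,F_1,F_2)$ equal to the sectional curvature and yields exactly the signs in \eqref{euler-lagrange}, consistently with the reduction to $k_1^2=K$ in \eqref{euler-lagrange-curve-surface}.
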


As a special case of \eqref{euler-lagrange}, if $\gamma : I \subset {\R} \to (N^{2},h)$ is a curve into a surface, then $\gamma$ is proper biharmonic if and only if

\begin{equation}\label{euler-lagrange-curve-surface}
\left\{
\begin{array}{l}
k_{1} = \cst\neq 0\\
k_{1}^2=K\,\,,
\end{array}
\right.
\end{equation}
where $K$ is the Gaussian curvature of the surface $(N^2,h)$.

\section{Formulas for the curvatures of implicit surfaces and implicit curves}

Let $F:\R^3\to\R$ be a differentiable function: we shall assume  that, for all $p\in N^2=F^{-1}(0)$, $(\grad F)({p})\neq 0$, so that $N^2$ is a regular surface in $\R^3$. If we denote by
$C_{HF}$ the cofactor matrix of the Hessian $HF$ of $F$,
the Gaussian curvature of the surface $N^2$ is given by (see, for example, \cite{RG})
\begin{equation}\label{eq:gauss-curvature}
K=\frac{(\grad F) (C_{HF}) (\grad F)^{\top}}{\|\grad F\|^4}\,\,.
\end{equation}

Let now $F:\R^3\to\R$ and $G:\R^3\to\R$ be two differentiable functions such that $F^{-1}(0)$ and $G^{-1}(0)$ are, as above, two regular surfaces in $\R^3$, and also assume that in all points $p\in F^{-1}(0)\cap G^{-1}(0)$ the gradients
$\grad F$ and $\grad G$ are linearly independent. Then $F^{-1}(0)\cap G^{-1}(0)$
defines the trace of a regular curve in $\R^3$ that, locally, can be parametrized
by arc length as $\gamma(s)=(x(s),y(s),z(s))$, $s\in(a,b)$. The unit tangent vector to $\gamma$ is then
$$
\gamma'(s)=\frac{d\gamma}{ds}=T=\frac{\grad F\wedge\grad G}{\|\grad F\| \|\grad G\|}\,\, .
$$

The curve $\gamma$ can be seen as a curve both of $F^{-1}(0)$ and $G^{-1}(0)$.
For each point $p=\gamma(s)$, $s\in(a,b)$, we denote by $k_n^F(p)$ (respectively $k_n^G(p)$) the normal curvature at $p$ of the surface $F^{-1}(0)$ (respectively $G^{-1}(0)$) in the direction of $T$.

The curvature $k(s)$ of the curve $\gamma:(a,b)\to\R^3$ can be computed in terms of the normal curvatures  $k_n^F(p)$ and $k_n^G(p)$, $p=\gamma(s)$, as
\begin{equation}\label{eq:curvature-gamma}
k^2=\frac{1}{\sin^2\vartheta} \left( (k_n^F)^2+(k_n^G)^2-2(k_n^F) (k_n^G)\cos\vartheta\right)\,,
\end{equation}
where $\vartheta$ is the angle between $(\grad F)(p)$ and $(\grad G)(p)$, that is
$$
\cos\vartheta=\frac{\langle \grad F, \grad G\rangle}{\|\grad F\| \|\grad G\|}\,\,.
$$
The proof of \eqref{eq:curvature-gamma} is immediate. In fact, $k(s)$ is the norm
of $\gamma''(s)=d^2\gamma/ds^2$ which is normal to $T$. Thus

$$
\gamma''=\alpha \frac{\grad F}{\|\grad F\|}+ \beta \frac{\grad G}{\|\grad G\|}
$$ for some functions $\alpha,\beta:(a,b)\to \R$ which, recalling that
$$k_n^F=\langle \gamma'', \frac{\grad F}{\|\grad F\|}\rangle\,,\quad k_n^G=\langle \gamma'', \frac{\grad G}{\|\grad G\|}\rangle\,\,,$$
can be expressed by:
$$
\alpha=\frac{k_n^F-k_n^G \cos\vartheta}{\sin^2\vartheta}\,,\quad \beta=\frac{k_n^G-k_n^F \cos\vartheta}{\sin^2\vartheta}\,\,.
$$

Finally, looking at $\gamma(s)$ as a curve in the surface $F^{-1}(0)$, at a point $p=\gamma(s)$
the geodesic curvature $k_1(s)$, the normal curvature $k_n^F(p)$ and the curvature $k(s)$ are related by the formula
\begin{equation}\label{eq:curvature-normal-geodesic}
k^2=k_1^2+(k_n^F)^2\,\,.
\end{equation}

Thus, combining  \eqref{eq:curvature-gamma} and \eqref{eq:curvature-normal-geodesic}, we have the following proposition.

\begin{proposition}\label{pro:geodesic}
Let $F:\R^3\to\R$ and $G:\R^3\to\R$ be two differentiable functions such that $F^{-1}(0)$ and $G^{-1}(0)$ are two regular surfaces in $\R^3$. Assume that in all points $p\in F^{-1}(0)\cap G^{-1}(0)$ the gradients
$\grad F$ and $\grad G$ are linearly independent. Then the geodesic curvature $k_1$ of the curve $\gamma:(a,b)\to F^{-1}(0)\subset\R^3$, with $\gamma(s)\in F^{-1}(0)\cap G^{-1}(0)$, for all $s\in(a,b)$,
 is given by
\begin{equation}\label{eq:geodesic}
k_1^2=\frac{(\cos\vartheta k_n^F - k_n^G)^2}{\sin^2\vartheta}\,\,.
\end{equation}
\end{proposition}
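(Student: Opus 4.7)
The plan is essentially a direct algebraic combination of the two formulas \eqref{eq:curvature-gamma} and \eqref{eq:curvature-normal-geodesic} established just before the statement, so the geometric content is already in place and no new ideas are needed; only a careful algebraic manipulation is required.

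First I would invoke \eqref{eq:curvature-normal-geodesic}, which views $\gamma$ as a curve on the surface $F^{-1}(0)$ and splits its curvature $k$ into geodesic and normal parts, giving $k_1^2 = k^2 - (k_n^F)^2$. Then I would substitute the expression for $k^2$ supplied by \eqref{eq:curvature-gamma}:
\begin{equation*}
k_1^2 \;=\; \frac{(k_n^F)^2+(k_n^G)^2-2 k_n^F k_n^G\cos\vartheta}{\sin^2\vartheta} \;-\;(k_n^F)^2.
\end{equation*}

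Next I would bring the $(k_n^F)^2$ term over a common denominator $\sin^2\vartheta$ and use the identity $1-\sin^2\vartheta=\cos^2\vartheta$ to rewrite the numerator as
\begin{equation*}
(k_n^F)^2\cos^2\vartheta \;-\; 2\cos\vartheta\, k_n^F k_n^G \;+\; (k_n^G)^2,
\end{equation*}
which is the perfect square $(\cos\vartheta\, k_n^F - k_n^G)^2$. Dividing by $\sin^2\vartheta$ yields \eqref{eq:geodesic}, as desired.

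There is essentially no obstacle: since the hypotheses on $F$, $G$ and their gradients guarantee $\sin\vartheta\neq 0$ and the regularity needed for \eqref{eq:curvature-gamma} and \eqref{eq:curvature-normal-geodesic}, the proof reduces to the completion-of-squares step above. The only point worth mentioning is that \eqref{eq:geodesic} determines $k_1^2$ but not the sign of $k_1$, which is consistent with the fact that the geodesic curvature is an unsigned quantity in this setting (the orientation of the surface normal has not been fixed).
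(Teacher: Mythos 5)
Your proof is correct and is exactly the argument the paper intends: it states the proposition as an immediate consequence of combining \eqref{eq:curvature-gamma} with \eqref{eq:curvature-normal-geodesic}, and your completion-of-squares computation supplies the (routine) algebra. Nothing further is needed.
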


Moreover, taking twice the derivative of  $F(\gamma(s))=0$, we find
$$
0=\frac{d^2 F}{ds^2}=T (HF) T^{\top}+ \langle \grad F, \gamma''\rangle\,\,.
$$
It follows that
\begin{equation}\label{eq:knf}
k_n^F=-\frac{T (HF) T^{\top}}{\|\grad F\|}\,
\end{equation}
and, similarly,
\begin{equation}\label{eq:kng}
k_n^G=-\frac{T (HG) T^{\top}}{\|\grad G\|}\,\,.
\end{equation}

The main point here is that, to compute the geodesic curvature of the curve $\gamma$ defined as in Proposition~\ref{pro:geodesic}, there is no need to parametrize the intersection curve, because  \eqref{eq:geodesic} can be explicitly
written in terms of $\grad F$, $\grad G$ and the Hessian matrices $HF$ and $HG$.

\section{Biharmonic curves into real quadrics}

Let $\mathcal Q$ be a real, non degenerate quadric in $\R^3$. Then, with respect to an adapted orthonormal frame of $\R^3$,  $\mathcal Q=F^{-1}(0)$ where
\begin{equation}\label{eq:quadric-center}
F(x,y,z)= \frac{x^2}{a^2}+\xi \frac{y^2}{b^2}+\zeta \frac{z^2}{c^2}-1\,\,,\quad \xi,\zeta =\pm 1\; \text{and}\; a,b,c>0
\end{equation}
if $\mathcal Q$ is a quadric with center, or
\begin{equation}\label{eq:quadric-no-center}
F(x,y,z)= \frac{x^2}{a^2}+\eta \frac{y^2}{b^2}-2z\,,\quad \eta=\pm 1\; \text{and}\; a,b>0
\end{equation}
otherwise.

According to \eqref{euler-lagrange-curve-surface}, the Gauss curvature of the surface along a proper biharmonic curve must be a positive constant. If we compute the Gauss curvature of a quadric using \eqref{eq:gauss-curvature}
we get
\begin{equation}\label{eq:gauss-curvature-quadric-center}
K=\frac{\xi\,\zeta }{a^2\, b^2\, c^2 \left(\dfrac{ x^2}{a^4}+\dfrac{ y^2 }{b^4}+\dfrac{ z^2}{c^4}\right)^2}
\end{equation}
for the quadrics with center and
\begin{equation}\label{eq:gauss-curvature-quadric-no-center}
K=\frac{\eta }{a^2 b^2 \left(\dfrac{x^2}{a^4}+\dfrac{y^2 }{b^4}+1\right)^2}
\end{equation}
otherwise.  Thus a quadric with center can admit a proper biharmonic curve only if $\xi\,\zeta>0$. If $\xi=\zeta=1$ and $a=b=c$, then the quadric is a sphere and the proper biharmonic curves are the circles of radius $\sqrt{2} a/2$, a result proved in \cite{RCSMPP1}. In all the other cases, combining \eqref{euler-lagrange-curve-surface} and \eqref{eq:gauss-curvature-quadric-center}, we conclude that if there exists a proper biharmonic curve, it must be the intersection of the given quadric with an ellipsoid of the type
\begin{equation}\label{eq:ellipsoid-d}
\dfrac{ x^2}{a^4}+\dfrac{ y^2 }{b^4}+\dfrac{ z^2}{c^4}=d^2\,\, ,
\end{equation}
where $d\in\R$ . Similarly, a quadric without center can admit a  proper biharmonic curve only if $\eta>0$ . In this case, the biharmonic curve, if there exists, must be the intersection of the given quadric with a cylinder of the type
\begin{equation}\label{eq:cylinder-e}
\dfrac{x^2}{a^4}+\dfrac{y^2 }{b^4}=e^2-1\,\, ,
\end{equation}
where $e\in\R$ . \\

We are now in the right position to state the main result of the paper.

\begin{theorem} Let $\mathcal Q$ be a non degenerate quadric which is not a sphere.
\begin{itemize}
\item[(a)] If $\mathcal Q$ is a quadric with center (as in \eqref{eq:quadric-center}), then $\mathcal Q$ admits a proper biharmonic curve if and only if
\begin{equation}\label{condizioneesistenza}
    \xi=\zeta=1 \quad  {\rm and} \quad a=b\,\,.
\end{equation}
Moreover, if \eqref{condizioneesistenza} holds, the biharmonic curve is the intersection of the quadric $\mathcal Q$ with the ellipsoid \eqref{eq:ellipsoid-d} with $d^2=1/(ac)$.
\item[(b)]   If $\mathcal Q$ is a quadric without center (as in \eqref{eq:quadric-no-center}), then $\mathcal Q$ does not admit any proper biharmonic curve.
\end{itemize}
\end{theorem}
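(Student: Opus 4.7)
The plan is to cash in the reduction performed just before the theorem: the condition $k_{1}^{2}=K$ of \eqref{euler-lagrange-curve-surface}, together with \eqref{eq:gauss-curvature-quadric-center}--\eqref{eq:gauss-curvature-quadric-no-center}, has already forced any proper biharmonic curve to lie on an intersection $\mathcal Q\cap\{G=0\}$, where $G$ is either the ellipsoid \eqref{eq:ellipsoid-d} in case (a) or the cylinder \eqref{eq:cylinder-e} in case (b), and along such an intersection $K$ is automatically the explicit constant dictated by the parameters. Hence only one geometric check remains: the geodesic curvature $k_{1}$ of the intersection curve, viewed inside $\mathcal Q$, must equal $\sqrt{K}$ at every point. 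I would treat this as a polynomial identity modulo the ideal $(F,G)\subset\R[x,y,z]$.

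Concretely, plugging \eqref{eq:knf}--\eqref{eq:kng} into Proposition~\ref{pro:geodesic} and clearing the common denominator $\|\grad F\wedge\grad G\|^{2}$ converts $k_{1}^{2}=K$ into a polynomial equation
\begin{equation*}
P(x,y,z;\text{parameters})=0 \qquad \text{on }\mathcal Q\cap\{G=0\}.
\end{equation*}
Because $F$ and $G$ are diagonal quadratic forms, elimination of $x^{2}$ and $y^{2}$ modulo $(F,G)$ is straightforward: $P$ reduces to a polynomial in a single variable (say $z$) whose coefficients are rational expressions in $a,b,c,\xi,\zeta,d$ (respectively $a,b,\eta,e$), and biharmonicity demands that all of them vanish simultaneously. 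For case (a), the resulting system rules out $\xi\zeta<0$, consistently with the necessary positivity of $K$, and then forces two of the three semiaxes to coincide; relabelling so that this pair is $(a,b)$ one recovers $\xi=\zeta=1$ and $a=b$. Once $a=b$ is known, $\mathcal Q\cap\{G=0\}$ splits into two horizontal parallels, and a direct computation of $k_{1}$ and $K$ on such a parallel pins down the single admissible value $d^{2}=1/(ac)$. An identical analysis in case (b) yields an overdetermined system with no real solution in $e$, producing the nonexistence statement.

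The main obstacle is purely computational: the polynomial $P$ before reduction is of fairly high total degree in $x,y,z$, and the vanishing of its reduced coefficients is tedious to verify by hand. I would therefore exploit two simplifications. First, the diagonality of $HF$ and $HG$ keeps $T(HF)T^{\top}$ and $T(HG)T^{\top}$ as three-term sums and makes the problem symmetric in the triples $(a,x)$, $(b,y)$, $(c,z)$. Second, rather than carrying out a full Gr\"obner-style reduction, I would specialize $P$ at convenient points of the intersection (typically on the coordinate planes $x=0$ or $y=0$, where the identity collapses dramatically) to extract the necessary parameter conditions; checking that they are also sufficient then reduces to the short explicit calculation on the parallels already mentioned.
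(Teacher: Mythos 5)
Your overall strategy is the paper's own: reduce biharmonicity to the single algebraic identity $k_1^2=K$ along $\mathcal Q\cap\{G=0\}$, compute $k_1^2$ from Proposition~\ref{pro:geodesic} together with \eqref{eq:knf}--\eqref{eq:kng}, and turn the identity into a one-variable rational equation whose coefficients must all vanish. (The paper performs this reduction via an explicit trigonometric parametrization and the substitution $w=\sin^2u$ rather than by elimination modulo $(F,G)$, but since $x^2$ and $y^2$ are affine functions of $z^2$ on the intersection when $a\neq b$, the two reductions are equivalent; and your point-evaluation shortcut at $x=0$ and $y=0$ is literally the paper's argument in case (b).) There is, however, one concrete gap in case (a): the subcase $\xi=\zeta=-1$. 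This subcase is \emph{not} excluded by the positivity of $K$ (there $\xi\zeta=1>0$, the quadric being a two-sheeted hyperboloid), and your claim that the system ``forces two of the three semiaxes to coincide; relabelling so that this pair is $(a,b)$ one recovers $\xi=\zeta=1$ and $a=b$'' is wrong as stated. When $\xi=\zeta=-1$ the constancy of $k_1$ forces $b=c$ (the two axes carrying the negative signs), and the resulting surface is a two-sheeted hyperboloid of revolution about the $x$-axis; no relabelling turns this into the ellipsoid case, because the $x$-axis is distinguished by the normal form \eqref{eq:quadric-center}. The nonexistence in this branch comes from a further computation: with $b=c$ the curvature $k_1$ \emph{is} constant, and one must check that the remaining scalar equation $k_1^2=K$, which reduces to $a^2c^2d^4+1=0$, has no real solution. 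Your sketch asserts the final answer for this branch instead of deriving it, and that derivation is exactly the nontrivial step.

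A secondary caveat: in case (a) with $\xi=\zeta=1$ you propose to obtain the necessity of $a=b$ by evaluating $k_1^2$ at two points of the curve. Two evaluations yield a single polynomial relation among $a,b,c,d$, and it is not automatic that this relation has no solutions with $a\neq b$ for some admissible $d$; the paper instead compares leading coefficients of the full numerator and denominator (polynomials of degrees $8$ and $12$ in $w$), which is a genuinely stronger constraint. In case (b) the two-point trick works only because one verifies that $k_1^2(P_1)=k_1^2(P_2)$ is a quadratic in $e^2$ with no positive root when $a\neq b$; the analogous verification would have to be carried out, and may fail to suffice, in case (a). So the skeleton is right, but both the hyperboloid branch and the sufficiency of your point evaluations need to be filled in before the argument is complete.
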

\begin{proof}
 We shall begin considering  quadrics with center.
As we proved above, if there exists a proper biharmonic curve $\gamma$, it must be the intersection of $\mathcal Q$ with an ellipsoid \eqref{eq:ellipsoid-d}, i.e.
\begin{equation}\label{eq:quadric-intersection}
\gamma:\;\begin{cases}
F(x,y,z)= \dfrac{x^2}{a^2}+\xi \dfrac{y^2}{b^2}+\zeta \dfrac{z^2}{c^2}-1=0\vspace{3mm}\\
G(x,y,z)=\dfrac{ x^2}{a^4}+\dfrac{ y^2 }{b^4}+\dfrac{ z^2}{c^4}-d^2=0\,\,,
\end{cases}
\end{equation}
with $\xi\,\zeta>0$. Suppose that $\xi=\zeta=1$: then, using \eqref{eq:geodesic}, we can compute the geodesic curvature of the intersection curve $\gamma$ as a curve of the quadric $\mathcal Q$. A long, but straightforward, computation yields:
\begin{equation}\label{eq:k1-ellipsoid}
  k_1^2=\frac{1}{d^2} \frac{\left[d^2 \lambda_4-\left( \frac{x^2}{a^6}+\frac{y^2}{b^6}+\frac{z^2}{c^6}\right)\lambda_6\right]^2}{\lambda_8^2\left[d^2 \left(\frac{x^2}{a^8}+\frac{y^2}{b^8}+\frac{z^2}{c^8}\right) - \left( \frac{x^2}{a^6}+\frac{y^2}{b^6}+\frac{z^2}{c^6}\right)^2\right]}\,\,,
 \end{equation}
 where
 $$
 \lambda_n=a^n y^2 z^2 \left(b^2-c^2\right)^2+b^n x^2 z^2 \left(a^2-c^2\right)^2+c^n x^2 y^2 \left(a^2-b^2\right)^2\,\,.
 $$

 Now,  since $\mathcal Q$ is not a sphere, we can assume that $a\geq b>c$. Moreover, the curve $\gamma$ is a real curve with infinity points if and only if $d^2 c^2 -1>0$ . Under these hypotheses the curve $\gamma$ can be parametrized by
\begin{equation}\label{eq:gamma-u-ellipsoid}
 \gamma(u)=\begin{cases}
 x(u)= r_1 \cos u\\
 y(u)= r_2 \sin u\\
 z(u)= c \sqrt{1 - (r_1^2 \cos^2u)/a^2 - (r_2^2 \sin^2u)/b^2}\,\,,
 \end{cases}
\end{equation}
where
$$
r_1=a^2 \sqrt{\frac{1-c^2 d^2}{a^2-c^2}}\,\, ,\,\quad r_2=b^2 \sqrt{\frac{1-c^2 d^2}{b^2-c^2}}\,\,.
$$
Now, replacing \eqref{eq:gamma-u-ellipsoid} in \eqref{eq:k1-ellipsoid}, we obtain
\begin{equation}\label{eq:k1-u-ellipsoid}
k_1^2=\frac{8\left[A+4(c^2 d^2-1)B \cos 2u +(a^2-b^2)(c^2 d^2-1) \cos 4u \right]^2}{d^2 (c^2 d^2-1)\left[C+4 D \cos 2u - (a^2-b^2)(c^2 d^2 -1) \cos 4u\right]^3}\,\,,
\end{equation}
where $A,B,C,D$ are real constants given by
$$
A= -8 a^4 b^4 d^4+8 a^4 b^2 d^2+3 a^4 c^2 d^2-3 a^4+8 a^2 b^4 d^2-6 a^2 b^2 c^2 d^2-2 a^2 b^2+3 b^4 c^2 d^2-3 b^4\,\,,
$$
$$
B=a^4 \left(2 b^2 d^2-1\right)-2 a^2 b^4 d^2+b^4\,\,,
$$
$$
C=-4 a^4 b^2 d^2+a^4 c^2 d^2+3 a^4-4 a^2 b^4 d^2-2 a^2 b^2 c^2 d^2+2 a^2 b^2+b^4 c^2 d^2+3 b^4\,\,,
$$
$$
D=a^4 \left(b^2 d^2-1\right)-a^2 b^4 d^2+b^4\,\,.
$$
Next, by setting $w=\sin^2 u$ in \eqref{eq:k1-u-ellipsoid}, it is easy to conclude that, in terms of this new variable,
\begin{equation}\label{eq:k1-u-ellipsoidbis}
    k_1^2=\frac{N(w)}{D(w)} \,\, ,
\end{equation}
where the numerator and the denominator of \eqref{eq:k1-u-ellipsoidbis} are polynomials of degree $8$ and $12$ respectively. At this stage, direct inspection of the leading terms shows that $k_1$ can be a constant only if $a=b$. In this case, the condition that the curve $\gamma$ is proper biharmonic, that is $k_1^2-K=0$, becomes
$$
\frac{1-a^2 c^2 d^4}{a^4 c^2 d^4 \left(c^2 d^2-1\right)}=0
$$
from which the desired result follows.\\

When $\xi=\zeta=-1$ the computations are similar to the previous case. First, we point out that in this case
 $\gamma$ is a real curve with infinity points if and only if  $a^2 d^2-1>0$. Moreover, by means of a computation similar to \eqref{eq:k1-u-ellipsoid}, we can conclude that
if the geodesic curvature $k_1$ of $\gamma$ is constant then $b=c$. Next,
when $b=c$, the biharmonic condition $k_1^2-K=0$ reads now as
$$
\frac{a^2 c^2 d^4+1}{a^2 c^4 d^4 \left(a^2 d^2-1\right)}=0\,\, ,
$$
which has no real solution, so ending the case of quadrics with center. \\

When $\mathcal Q$ is a quadric without center, as in \eqref{eq:quadric-no-center}, a proper biharmonic curve $\gamma$, if it exists, must be, as we have remarked above, the intersection of $\mathcal Q$ with a cylinder, i.e.
\begin{equation}\label{eq:quadric-intersection}
\gamma:\;\begin{cases}
F(x,y,z)= \dfrac{x^2}{a^2}+\dfrac{y^2}{b^2}-2z=0\,,\vspace{3mm}\\
G(x,y,z)=\dfrac{x^2}{a^4}+\dfrac{y^2 }{b^4}-e^2+1=0\,\,,
\end{cases}
\end{equation}
where $e^2 > 1$ . Now, our method leads us to the following expression for the geodesic curvature $k_1$ of $\gamma$ :
\begin{equation}\label{eq:k1-u-ellipsoidtris}
  k_1^2=\frac{(\lambda_6^2-a^6b^6e^2\lambda_4)^2}{e^2\left[\lambda_8+x^2y^2(a^2-b^2)^2\right]^2(a^4b^4e^2\lambda_8-\lambda_6^2)}\,,
\end{equation}
where
$$
\lambda_n=b^nx^2+a^ny^2\,.
$$
In this case we propose a purely algebraic inspection of \eqref{eq:k1-u-ellipsoidtris} to show that $k_1^2$ is constant if and only if $a=b$.
First we observe that the points
$$
P_1=\left(0, b^2 \sqrt{e^2-1}, \frac{b^2}{2} (e^2-1)\right)\,\,,\quad P_2=\left(a^2 \sqrt{e^2-1},0,\frac{a^2}{2} (e^2-1)\right)\,\,,
$$
belong to the connected curve $\gamma$. Next, direct substitution in \eqref{eq:k1-u-ellipsoidtris} gives
$$
k_1^2(P_1)=\frac{\left(b^2 e^2-a^2 \left(e^2-1\right)\right)^2}{a^8 e^2
   \left(e^2-1\right)}\,\,,\quad k_1^2(P_2)=\frac{\left(a^2 e^2-b^2 \left(e^2-1\right)\right)^2}{b^8 e^2
   \left(e^2-1\right)}\,\,.
$$
The condition $k_1^2(P_1)=k_1^2(P_2)$ is equivalent to a second degree equation in $e^2$ which admits no
positive solutions if $a\neq b$. Therefore in this case $k_1^2$ cannot be constant along $\gamma$. Conversely,  if $a=b$, then $\lambda_n=a^n(x^2+y^2)=a^{n+4}(e^2-1)$ and $k_1^2$ is constant.

Finally, under the hypothesis $a=b$,
the biharmonicity condition, $k_1^2-K=0$, becomes
$$
\frac{1}{a^4 e^4 \left(e^2-1\right)}=0  \, \, ,
$$
which has no solution.
\end{proof}

\section{Applications to other types of implicit surfaces}

In this section we discuss two examples where it is possible to apply the scheme used to classify biharmonic curves into quadrics.

\begin{example}
Consider the implicit surface $N^2=F^{-1}(0)$ where
$$
F(x,y,z)=\frac{z^{2 n}}{c^2}+\left(x^2+y^2\right)^n-1\,\,,\quad c >0\,\,,\, \, \,\, n \geq 1 \,\, .
$$
The surface $N^2$ is a surface of revolution that, for $n=1$ and $c\neq 1$, reduces to an ellipsoid. In this case, the curves with constant Gauss curvature are the parallels given by the intersection of the surface $N^2$ with the planes $z=d=\cst$. Thus, unless $N^2$ is a sphere (i.e., $n=1=c$), the only possible biharmonic curves are
\begin{equation}\label{eq:n-quadric-intersection}
\gamma:\;\begin{cases}
F(x,y,z)= \dfrac{z^{2 n}}{c^2}+\left(x^2+y^2\right)^n-1=0\,\,,\quad c>0\,\,,\vspace{3mm}\\
G(x,y,z)=z-d=0\,\,, \quad d<\sqrt[n]{c}\,\,.
\end{cases}
\end{equation}
Using \eqref{eq:gauss-curvature} we can compute the Gauss curvature of $N^2$ along $\gamma$:
$$
K=\frac{c^4 (2 n-1) A^{2 n} d^{2 n+2} \left(c^2 A^n+d^{2 n}\right)}{\left(c^4
   d^2 A^{2 n}+A d^{4 n}\right)^2}\,\,,
$$
where
$$
A=\left(1-\frac{d^{2 n}}{c^2}\right)^{\frac{1}{n}}\,\,.
$$
Next, computing the geodesic curvature of $\gamma$ by means of \eqref{eq:geodesic}, we find
$$
k_1^2=\frac{c^4 A^{2 n} d^{4 n}}{\left(c^2-d^{2 n}\right)^2 \left(c^4 d^2 A^{2 n}+A
   d^{4 n}\right)}\,\,.
$$
Finally, the condition of biharmonicity, that is $k_1^2=K$, for a parallel \eqref{eq:n-quadric-intersection} becomes:
\begin{equation}\label{eq:n-quadric-biharmonic}
2 c^4 (1-n) d^{2 n}+d^{6 n-2} \left(1-\frac{d^{2 n}}{c^2}\right)^{\frac{1-2
   n}{n}}-c^6 (2 n-1) \left(1-\frac{d^{2 n}}{c^2}\right)=0\,\,.
\end{equation}
Although it is not easy to write down the explicit solutions of \eqref{eq:n-quadric-biharmonic} as a function $d=d(n,c)$, we point out that \eqref{eq:n-quadric-biharmonic} admits a solution $d_0\in[0,\sqrt[n]{c})$ for any $c>0$ and $n\geq 1$. To see this, we observe that the left-hand side of \eqref{eq:n-quadric-biharmonic} is continuous in $d$, assumes a negative value for $d=0$ and diverges to $+\infty$ as $d$ tends to $\sqrt[n]{c}$ .
\end{example}

\begin{example}
In this example we consider the case of graphs of revolution. Thus we assume that $N^2=F^{-1}(0)$ with
$$
F(x,y,z)=z-f(\sqrt{x^2+y^2})\,\,,
$$
for some differentiable function $f$. As in the previous example, the only curves such that the restriction of the Gauss curvature of $N^2$ is  constant are the parallels $z=d=\cst$. If we put $\rho=\sqrt{x^2+y^2}$, the Gauss curvature of $N^2$ along a parallel $z=d=f(\rho)$ and the geodesic curvature are respectively
$$
K=\frac{f'(\rho ) f''(\rho )}{\rho  \left(f'(\rho )^2+1\right)^2}\,\, ,\quad k_1^2=\frac{1}{\rho ^2 \left(f'(\rho )^2+1\right)}\,\, .
$$
It follows that a parallel $\rho=\rho_0$ is biharmonic if and only if
\begin{equation}\label{eq:graf-biharmonic}
f'(\rho_0)^2-\rho_0  f'(\rho_0 ) f''(\rho_0 )+1=0\,\,.
\end{equation}

Moreover, if $f$ is a solution of the ODE
\begin{equation}\label{eq:ode-all-parallels-biharmonic}
f'(\rho)^2-\rho  f'(\rho) f''(\rho)+1=0\,\,,
\end{equation}
then all parallels are biharmonic. The solution of \eqref{eq:ode-all-parallels-biharmonic} can be explicitly computed and is given by
\begin{equation}\label{eq:all-parallels-biharmonic}
f(\rho)=\frac{1}{2} \left(\rho \sqrt{e^{2 {c_1}} \rho^2-1}-e^{-{c_1}} \log \left(2
   e^{{c_1}} \left(\sqrt{e^{2 {c_1}} \rho^2-1}+e^{{c_1}}
   \rho\right)\right)\right)+{c_2}\,\, ,\quad c_1,c_2\in\R\,\, .
\end{equation}
We remark that the surface of revolution with the property that all its parallels  are biharmonic was already found in \cite{RCSMPP1} using different methods and afterwords J.~Monterde, in \cite{JM}, proved that it is the only surface in $\R^3$ with the property that all the level curves of the Gauss curvature are proper biharmonic
and the gradient lines of the Gauss curvature are geodesics.
\end{example}


\begin{thebibliography}{99}


\bibitem{RCSMCO1} R.~Caddeo, S.~Montaldo, C.~Oniciuc.
 Biharmonic submanifolds of $\s^3$. {\it Internat. J. Math.} 12 (2001), 867--876.

\bibitem{RCSMPP1} R.~Caddeo, S.~Montaldo, P.~Piu.  Biharmonic
curves on a surface. {\em Rend. Mat. Appl.} 21 (2001), 143--157.

\bibitem{COP} R.~Caddeo, C.~Oniciuc, P.~Piu.  Explicit formulas for
non-geodesic biharmonic curves of the Heisenberg group.
{\em Rend. Sem. Mat. Univ. Politec. Torino} 62 (2004), 265--277.

\bibitem{CMOP} R.~Caddeo, S.~Montaldo, C.~Oniciuc, P.~Piu. The Euler-Lagrange method for biharmonic curves. {\it Mediterr. J. Math.}, 3 (2006), 449--465.

\bibitem{JIJL} J-I. Inoguchi, J-E. Lee. Affine biharmonic curves in 3-dimensional homogeneous geometries. {\em Mediterr. J. Math.} 10 (2013), 571--592

\bibitem{JCJIJL} J.T.~Cho, J-I.~Inoguchi and J-E.~Lee.
 {Biharmonic curves in 3-dimensional Sasakian space forms}.
 {\em Ann. Mat. Pura Appl.} 186 (2007), 685--701.


 \bibitem{Dim} I.~Dimitric. { Submanifolds of $\mathbb E^m$ with harmonic
mean curvature vector}. {\em Bull. Inst. Math. Acad. Sinica} 20 (1992), 53--65.


\bibitem{DF1} D.~Fetcu. A note on biharmonic curves in Sasakian space forms. {\em Ann. Mat. Pura Appl.} 189 (2010), 591--603.

\bibitem{DF2} D. Fetcu. Biharmonic Legendre curves in Sasakian space forms. {\em J. Korean Math. Soc.} 45 (2008), 393--404.


\bibitem{EL83} J.~Eells, L.~Lemaire. {\it  Selected topics in harmonic maps.} CBMS Regional Conference Series in Mathematics, 50. American Mathematical Society, Providence, RI, 1983.

\bibitem{RG} R.~Goldman. Curvature formulas for implicit curves and surfaces.
{\em Computer Aided Geometric Design} 22 (2005), 632--658.



\bibitem{Jiang} G.Y.~Jiang. {2-harmonic maps and their first and second variation formulas}.
{\it Chinese Ann. Math. Ser. A 7},  7 (1986), 389--402.

\bibitem{Laugwitz}
D.~Laugwitz.
\newblock {\em Differential and Riemannian geometry}.
\newblock Academic Press, 1965.

\bibitem{SMCO} S.~Montaldo, C.~Oniciuc. {A short survey on biharmonic maps between riemannian manifolds}. {\it Rev. Un. Mat. Argentina}, 47 (2006), 1--22.


\bibitem{SMIO} S. Montaldo, I.I.  Onnis. Biharmonic curves on an invariant surface. {\em J. Geom. Phys.} 59 (2009), 391--399.

\bibitem{JM} J.~Monterde. Surfaces with a family of nongeodesic biharmonic curves.
{\em Rend. Mat. Appl.}  28 (2008), 123--131.


\end{thebibliography}
\end{document}